\documentclass[10pt]{amsart}
\usepackage{euler,amsmath,amssymb,amscd}
\usepackage{epsfig}

\textwidth=32cc \baselineskip=16pt

\oddsidemargin .2in

\evensidemargin .2in

\newtheorem{proposition}{Proposition}

\newtheorem{lemma}{Lemma}

\theoremstyle{definition}

\newtheorem{remark}{Remark}

\newtheorem{definition}{Definition}

\newtheorem*{ack}{Acknowledgement}


\def\til{\widetilde}
\def\mbb{\mathbb}

\def\cal{\mathcal}
\def\mfk{\mathfrak}
\def\ten{\otimes}

\def\tu{\textup}

\def\a{\alpha}

\def\d{\delta}

\def\bP{\mbb P}


\def\bG{\mathbb G}

\def\inj{\hookrightarrow}


\def\spec{\tu{Spec\,}}


\def\om2{\omega^{\ten 2}}

\def\Mg{\overline{M}_g}

\def\inj{\hookrightarrow}

\def\M{\bar{M}}
\def\cO{\mathcal O}



\def\bar{\overline}

\input xy
\xyoption{all}

\input epsf
\epsfxsize=2in

\usepackage{charter}
\def\Hg{\bar H_g}
\def\Bg{\bar B_{2g+2}}
\def\Mg{\bar M_g}
\def\M{\bar M}
\def\ZZ{\mathbb Z}

\linespread{1.16}

\begin{document}

\title[Outline of the log MMP for $\Mg$]{An outline of the log minimal model program for the moduli space of curves}
\date{\today}
\author{Donghoon Hyeon}
\address{Department of Mathematics\\ Pohang University of Science and Technology\\ South Korea}
\email{dhyeon@postech.ac.kr}
\begin{abstract} B. Hassett and S. Keel predicted that there is a descending sequence of critical
$\alpha$ values where the log canonical model for the moduli space of stable
curves with respect to $\alpha \delta$ changes, where $\delta$ denotes the divisor of singular curves. 
We derive a conjectural formula
for the critical values in two different ways, by working out the intersection
theory of the moduli space of hyperelliptic curves and by computing the GIT
stability of certain curves with tails and bridges. The results give a rough
outline of how the log minimal model program would proceed, telling us when the
log canonical model changes and which curves are to be discarded and acquired
at the critical steps.
\end{abstract}
\maketitle

\section{Introduction}
 Recently in a series of papers, B. Hassett, Y. Lee and the author completed the first couple of steps of the log minimal model program and showed that the log canonical model
$
\Mg(\a) := Proj \oplus_{m\ge 0} \Gamma(m(K_{\overline {\mathcal M}_g} + \a \delta))
$
changes at certain critical values of $\a$, contracting the locus of elliptic tails and bridges.
Moreover, these new compactifications of $M_g$ were identified with GIT moduli spaces parametrizing ordinary cusps and tacnodes. The whole story so far demonstrates the so-called  Hassett-Keel principle: As we run the log minimal model program for $\bar M_g$ with respect to  $\a\d$, decreasing $\a$ from $1$ to $0$, we produce birational contractions with exceptional loci in the divisors $\delta_i$, and the resulting varieties are moduli spaces parametrizing curves with increasingly worse singularities.  In general, flips are expected in between the divisorial contractions with centers in $\delta_i$, possibly introducing curves with singularities other than rational singularities. And it is the existence of the flips that makes carrying out this program highly nontrivial. Recently, \cite{ASvdW} gave a stack-theoretic construction of the second flip, of which projective moduli space would be proven to exist in a forthcoming work. Also, there is an excellent survey on the Hassett-Keel program by Fedorchuk and Smyth \cite{FS}, which puts the whole program in a broader context of birational geometry of $\Mg$. 

While we are making steady progress in this program, step by step arduously, we  turn our attention in this paper to some work in the literature where Hassett-Keel principle is in clear display. In \cite{Kap, Kap1}, Kapranov constructed birational morphisms from $\overline{M}_{0,n}$ to GIT quotients $(\bP^1)^n/\!\!/_{\bf x}SL_2$ where ${\bf x}$ denotes the linearization $\cO(x_1, \dots, x_n)$. The symmetric linearization case $\cO(1, \dots, 1)$ was worked out in \cite{GvdP}. 
See also \cite{Has1} in which such morphisms are given functorially
 in a more general setting of weighted pointed curves, and \cite{KiemM} where the morphism is explicitly decomposed into a sequence of blow-downs.
 For the symmetric linearization ${\bf x} = (1, \dots, 1)$, the birational morphism descends to give a birational morphism from $\overline{M}_{0,n}/\mathfrak S_n$ to the compact moduli space $\bP^n/\!\!/SL_2$ of semistable binary forms of degree $n$. For the even $n = 2g+2$ case where $\overline{M}_{0,n}/\mathfrak S_n$ is isomorphic to the moduli space of stable hyperelliptic curves, Avritzer and Lange give a detailed functorial description of the birational morphism which they denote by $f_g$ \cite{AL}: On the locus $H_g$ of smooth hyperelliptic curves, $f_g$ is an isomorphism, and it maps a general curve
\[
(R_1\cup R_2, x_1, \dots, x_{2g+2}), \quad x_k \in
\begin{cases} R_1, \quad k\le j \\ R_2, \quad k > j
\end{cases}
\]
in the boundary $\widetilde B_j$ (\S\ref{S:intersection}), $j \le g+1$, 
to the binary form  $(x - x_0)^j\prod_{k=j+1}^{2g+2}(x-x_k)$, where $x_0$ denotes the point of attachment.
Note that for odd $j$, the stable curve associated to a general pointed curve in $\widetilde B_j$ is a {\it genus $\frac{j-1}2$ tail} , i.e. a  curve consisting of a genus $g-\frac{j-1}2$ curve $D$ and a genus $\frac{j-1}2$ curve $T$ meeting in one node $p$. For even $j$, it is a curve of the form $D \cup_{p_1,p_2} B$ consisting of a genus $g-\frac j2-1$ curve $D$ and a genus $\frac j2$ curve $B$ meeting in two nodes $p_1$ and $p_2$. Such a curve will be called a {\it genus $\frac j2$ bridge}. In terms of stable curves, $f_g$ ``replaces" the {\it genus $\frac {j-1}2$ tail} (resp. {\it genus $\frac j2$ bridge})  by a singularity locally analytically isomorphic to $y^2 = x^j$ if $j$ is odd (resp. even), putting forth a picture that is very desirable from the view point of Hassett-Keel principle. Indeed, for $g = 2$, $f_2$ is precisely the divisorial contraction given in \cite{Has} and \cite{HL1}.

Yongnam Lee and the author took the initiative from this and considered
the log minimal model program for the moduli space $\Hg$ of hyperelliptic curves with suitable log canonical divisor $K_{\Hg} + D_\a$, and proved that the boundary divisors $\widetilde B_j$, $j = 3, 4, 5$ are contracted in succession \cite{HL3}. The problem had seemed to get combinatorially too complex very quickly as $j$ gets larger.

In this article, we shall postulate that the log MMP for $\bar H_g$ obeys Hassett-Keel, and deduce a conjectural formula for the critical values $\a_j$ at which the log canonical model $\Hg(\a)$ changes. Furthermore, we also consider the other aspect of Hassett-Keel principle that dictates how $\Hg(\a)$ manifests itself as a moduli space: We  compute the GIT stability of the $m$th Hilbert point of certain curves with singularity locally analytically isomorphic to $y^2 = x^k$, and the results predict that such a curve will go from being GIT unstable to (semi)stable at a critical value. The critical values are then shown to precisely match $\a_j$s via the relation between $\a$ and $m$. We shall make this more precise and state the main results in the remainder of the introduction.

Let $L_\a$ be the $\mathbb Q$-divisor
\[
L_\a  :=  \sum_{s=1}^{\lfloor\frac{g+1}2\rfloor} \left\{
\frac{13}{4g+2} s(g+1-s) + 2(\a-2) \right\} \til B_{2s} +
\sum_{s=1}^{\lfloor\frac{g}2\rfloor} \left\{\frac{13}{4g+2}s(g-s)
+ \frac{\a-2}2\right\} \til B_{2s+1}
\]
\normalsize and define $\bar H_g(\a)$ by
\[
\bar H_g(\a) := Proj \oplus_{n\ge 0} \Gamma(\bar H_g, n (4g+2) L_\a).
\]
Hassett-Keel principle gives us a sequence of critical values $\a_j$ and birational maps $f_{g,j} : \Hg(\a_{j-1}) \dashrightarrow \Hg(\a_j)$ precisely at which $\tilde B_j$ gets contracted:
\begin{center}
\[
\xymatrix{
\overline H_g \ar[r]^-{f_{g,3}}\ar@/_2pc/[rrrr]_{f_g} & \overline H_g(9/11) \ar@{->}[r]^-{f_{g,4}}
& \Hg(7/10) \ar@{-->}[r]^-{f_{g,5}} & \cdots  \ar@{-->}[r]^-{f_{g, g+1}} & \overline H_g(\a_{g+1}) 
}
\]
\end{center}

\begin{proposition}\label{P:main1}
\begin{enumerate}
\item (conjectural critical values) Critical values $\a_j$ are given by
\begin{equation*}\label{E:critical-values}\tag{$\diamondsuit$}
\a_j = \left\{\begin{array}{llll}
1, & j = 2\\
\frac{3j^2+ 10 j - 21}{8j^2 - 8j -4}, &  j = 3, 5, 7, \dots\\
\frac{3j+16}{8j+8}, &  j = 4, 6, 8, \dots
\end{array}\right.
\end{equation*}

\item (conjectural discrepancy formula) The pullback by $f_{g, \le j} := f_{g,j}\circ f_{g,j-1} \circ \cdots \circ f_3$ of the cycle theoretic image of $L_\a$ on $\Hg(\a_j)$ is given by
\begin{equation*}\label{E:Lai}
L_\a^{[j]} = \sum_{k=2}^j \binom k2 r_2 \widetilde B_k + \sum_{k=j+1}^{g+1} r_k \widetilde B_k. \tag{$\heartsuit$}
\end{equation*}
where $r_k$ are $\widetilde B_k$-coefficient of $L_\a$.
In particular, $$L_\a^{[g+1]} = r_2 \sum_{k=2}^{g+1} \binom k2 \widetilde B_k.$$

\item There is an isomorphism $\Hg(\a_{g+1}) \simeq \Bg$ via which the map $f_g$  is identified with $f_{g, g+1}\circ f_{g,g} \circ \cdots \circ f_{g,4}\circ f_{g,3}$ away from a codimension $\ge 2$ locus.
\end{enumerate}
\end{proposition}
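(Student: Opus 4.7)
The plan is to dispatch the three parts in order, each building on the previous.

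For part (1), for each $j$ with $3 \le j \le g+1$ I would build a one-parameter test family $F_j \subset \tilde B_j$ whose base is contracted by the expected map $f_{g,j}$. For odd $j = 2s+1$, $F_j$ sweeps the moduli of the genus $s$ tail component, and for even $j = 2s$ it sweeps the corresponding genus $s$ bridge. Using the intersection pairings $\tilde B_k \cdot F_j$ (computed by the intersection-theoretic methods of Section~\ref{S:intersection}), the contraction condition $L_\alpha \cdot F_j = 0$ becomes an affine equation in $\alpha$ whose unique root is $\alpha_j$. The parity split in ($\diamondsuit$) is precisely the tail-versus-bridge dichotomy.

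For part (2), I would induct on $j \ge 2$. The base case $j = 2$ asserts $L_\alpha^{[2]} = L_\alpha$, which is immediate from the definition of $L_\alpha$. For the inductive step, since $f_{g,j}$ contracts $\tilde B_j$,
\[
L_\alpha^{[j]} = L_\alpha^{[j-1]} + c_j \tilde B_j
\]
for some discrepancy $c_j$, and $c_j$ is uniquely determined by the numerical triviality $L_\alpha^{[j]} \cdot F_j = 0$ on the contracted test curve from part~(1). Substituting the inductive form for $L_\alpha^{[j-1]}$ and the intersection numbers computed in part~(1) evaluates $c_j = \binom{j}{2} r_2 - r_j$, which replaces the $\tilde B_j$-coefficient by $\binom{j}{2} r_2$. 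Coefficients of $\tilde B_k$ for $k > j$ are unchanged and coefficients for $k < j$ were already in the claimed form by induction, yielding ($\heartsuit$); at $j = g+1$ one collects the symmetric closed form.

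For part (3), I would identify $\Hg(\alpha_{g+1})$ with $\Bg = \bP^{2g+2}/\!\!/\SL_2$ via the Avritzer-Lange morphism $f_g$ described in the introduction. The ample class $\mathcal O(1)$ on $\Bg$ pulls back under $f_g$ to a divisor on $\Hg$ whose $\tilde B_j$-coefficient equals $\binom{j}{2}$: by the Avritzer-Lange description a generic point of $\tilde B_j$ maps to a binary form with a $j$-fold root, and $\binom{j}{2}$ is the order of vanishing of the universal discriminant along that stratum. Comparing with $L_\alpha^{[g+1]} = r_2 \sum_{k=2}^{g+1} \binom{k}{2} \tilde B_k$ from part~(2), the two divisors agree up to the scalar $r_2$, so they induce the same contraction in codimension one and $f_{g, \le g+1}$ can be identified with $f_g$ away from codimension $\ge 2$.

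The main obstacle is part (1): constructing the test families $F_j$ and computing the pairings $\tilde B_k \cdot F_j$ in a form uniform enough to yield the closed expressions in ($\diamondsuit$) for both parities simultaneously. Once those intersection numbers are in hand, part~(2) becomes routine inductive bookkeeping and part~(3) reduces to the discriminant calculation on $\Bg$.
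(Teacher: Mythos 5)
There is a genuine gap in your part (1), and it infects the logical architecture of the whole argument: you cannot decouple (1) from (2). The critical value $\alpha_j$ is \emph{not} the root of $L_\alpha\cdot F_j=0$ for a contracted test curve $F_j\subset\widetilde B_j$; it is the root of $L_\alpha^{[j-1]}\cdot F_j=0$, where $L_\alpha^{[j-1]}$ already carries the discrepancies $\binom k2 r_2-r_k$ along $\widetilde B_k$, $k<j$, accumulated from the earlier contractions. Concretely, take $j=4$ and the vital curve $C$ given by the partition $\{1,1,2,2g-2\}$: by the Keel--McKernan formula its pairing with the original $L_\alpha$ is $2r_3-r_4=2-\alpha$, whose root is $\alpha=2$, not $\alpha_4=7/10$; only after replacing $r_3$ by $\binom 32 r_2$ does one get $\binom 42 r_2-r_4=10\alpha-7$. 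Worse, for $j\ge 5$ the naive pairing with $L_\alpha$ even depends on the choice of contracted curve (e.g.\ for $j=5$ the partitions $\{1,1,3,2g-3\}$ and $\{1,2,2,2g-3\}$ give equations differing by $\tfrac{13}{2}(\alpha-1)$), so your ``affine equation in $\alpha$'' is not even well defined; it becomes well defined (and independent of the choice) precisely because the lower coefficients have been corrected to $\binom k2 r_2$. This is why the paper runs a single induction: assuming ($\heartsuit$) at level $j-1$, the pairing of $L_\alpha^{[j-1]}$ with the vital curve $\{a,b,j-a-b,2g+2-j\}$ telescopes to $\binom j2 r_2-r_j$ for every choice of $a,b$; setting this to zero yields ($\diamondsuit$), and combining it with $\widetilde B_j\cdot C=-1$ yields the discrepancy and hence ($\heartsuit$) at level $j$. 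Your part (2) contains the right mechanics, but the critical values must be read off from $c_j=\binom j2 r_2-r_j=0$ inside that induction, not from a prior, independent computation with $L_\alpha$; also note the paper simply takes $F_j$ to be vital curves, for which all pairings are given by formula ($\dag$), rather than sweeping families whose intersection numbers you would still have to compute.

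Your part (3) is essentially sound and close to the paper's argument: both compare $L_\alpha^{[g+1]}=r_2\sum\binom k2\widetilde B_k$ with the pullback of the natural polarization on $\Bg$ and conclude the two contractions agree. The paper gets the latter from the Alexeev--Swinarski formula for the symmetric linearization (a positive multiple of $\sum_k \frac{2k(k-1)}{2g+1}B_k$), whereas you use the discriminant; that works, but the order of vanishing of the pulled-back discriminant along $\widetilde B_j$ is $j(j-1)=2\binom j2$ (each of the $\binom j2$ pairwise differences enters squared), not $\binom j2$ --- harmless here since you only need proportionality up to a positive scalar, but the coefficient as stated is off by a factor of $2$.
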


 The proposition is not only interesting on its own, but also provides useful information on the log minimal model program for $\bar M_g$. For instance, the critical values in this conjecture are expected to be the  critical values for the log MMP for $\bar M_g$ as well. Indeed, $\a_3 = 9/11$, $\a_4 = 7/10$, $\a_5 = 2/3$ and $\a_6 = 17/28$ have appeared as critical values in the aforementioned work. Also, our analysis of log MMP for $\bar H_4$ in \cite{HL3} plays a crucial role in our upcoming paper on the log MMP for $\M_4$ \cite{HL4}.

Hassett-Keel principle predicts that as $\delta_i$ is contracted at, say $\a = \a^\star$, genus $i$ tails $D\cup_p T$
are replaced in $\Mg(\a^\star)$ by suitable curves with $A_{2i}$ singularity $y^2 = x^{2i+1}$.
{\it How does $\a$ factor in the GIT construction of the log canonical models}, so that the stability of $D\cup_p T$ changes at the critical value $\a^\star$?
Recall that the GIT stability depends on how the parameter space is linearized. The Hilbert scheme of $\nu$-canonically embedded curves naturally admits a one-parameter family of linearizations, and $\a$ can be regarded as the parameter in the variation of GIT quotients. We shall consider the stability of $D\cup_p T$ with rational cuspidal tail $T$ and show that it is stable with respect to a one-parameter subgroup coming from ${\rm Aut}(T)$ for $\a > \a_{2i+1}$ but becomes unstable for $\a < \a_{2i+1}$, where $\a_j$ are precisely the critical values obtained by intersection theory in Proposition~\ref{P:main1}. The subscript $2i+1$ comes from the relation $\delta_i \cap \Hg = \widetilde B_{2i+1}$. This strongly suggests how the log canonical models can be constructed as GIT quotients. To state the results precisely, recall that $\mu^L(x, \rho)$ denotes the Hilbert-Mumford index with respect to a 1-PS $\rho : \bG_m \to G$ of a point $x$ in a projective variety with $L$-linearized $G$ action. Also, for any projective variety $X$, $[X]_m$ denotes its $m$th Hilbert point.


\begin{proposition}\label{P:tails}
 Let $C = D\cup_p T$ be a bicanonical genus $i$ tail, $i\ge 2$, such that $T$ is a rational curve with an $A_{2i}$ singularity  $y^2 = x^{2i+1}$. Then there exists a one parameter subgroup $\rho$ of $SL_{3g-3}$ coming from ${\rm Aut}(T)$ such that
\[
\mu([C]_m, \rho) =  \frac13(m-1) ((4i^2-8i+2)m - 3i^2)
\]
Using the relation (\ref{E:anm}) between $m$ and $\a$ (\S\ref{S:GIT}), we may write it in terms of $\a$  and $j = 2i+1$:
\[
\begin{array}{cllll}
\mu([C]_m, \rho) & = &
 \frac{17\a-8}{8(7-10\a)^2}\left((8j^2 - 8j - 4)\a - (3j^2+ 10 j - 21)\right) \\
 & = & \frac{(8j^2 - 8j - 4)(17\a-8)}{8(7-10\a)^2}(\a - \a_j).
 \end{array}
 \]
Thus $C$ is Hilbert stable with respect to $\rho$ for  $\a > \a_{2i+1}$, strictly semistable for $\a = \a_{2i+1}$ and unstable for $\a < \a_{2i+1}$.

Retain $C$ and $\rho$, and consider the basin of attraction $B_\rho([C]_m)$.
We obtain:
\begin{enumerate}
\item Let $C' = D \cup_{p'} T'$ be a bicanonical genus $i$ tail, $i\ge 2$, where $T'$ is a hyperelliptic
curve of genus $i$ and $p'$ is a Weierstrass point of $T'$. Then
\[
\mu([C']_m, \rho) = \mu([C]_m, \rho).
\]
 In particular $C'$ is $\a$-Hilbert stable with respect to $\a
> \a_{2i+1}$, strictly semistable for $\a = \a_{2i+1}$ and unstable for $\a < \a_{2i+1}$.  $T'$ (and sometimes $C'$ itself by abusing terminology) is called a
\emph{Weierstrass genus $i$ tail}.

\item Let $C''$ be a bicanonical genus $g$ curve obtained from $C'$ by replacing $T'$ by
an $A_{2i}$ singularity. That is, $C''$ is of genus $g$, has $A_{2i}$ singularity at $p''$ and admits a
partial normalization $\nu : (D, p) \to (C'', p'')$. Then
\[
\mu([C'']_m, \rho) = -\mu([C']_m, \rho)
\]
In particular, $C''$ is $\a$-Hilbert unstable with respect to $\a
> \a_{2i+1}$, strictly semistable for $\a = \a_{2i+1}$ and stable otherwise.
\end{enumerate}

\end{proposition}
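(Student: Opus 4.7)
Plan. The proof reduces to a character calculation for the 1-PS $\rho$ on the bicanonical linear series of $C$, followed by a basin-of-attraction argument for $C'$ and a complementary calculation for $C''$. The unifying ingredient is the $A_{2i}$ cusp semigroup $S = \langle 2, 2i+1\rangle \subset \mathbb Z_{\geq 0}$, which simultaneously controls the Gorenstein adjunction on $\nu^*\omega_T$ and the Weierstrass gap sequence of $T'$ at its Weierstrass point.

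Setup and the main weight sum. Fix a parameter $t$ on the normalization $\tilde T \cong \mathbb P^1$ with $\tilde q \leftrightarrow \{t=0\}$ and $\tilde p \leftrightarrow \{t=\infty\}$, and parametrize the cusp by $x = t^2$, $y = t^{2i+1}$. The action $t \mapsto \lambda t$ lies in ${\rm Aut}(T)$, fixes both $p$ and $q$, extends by the identity on $D$, and, via the bicanonical representation, produces a 1-PS of $GL(H^0(\omega_C^{\otimes 2}))$; after centering the average weight we obtain $\rho \subset SL_{3g-3}$. Since $\rho$ comes from ${\rm Aut}(C)$, $[C]_m$ is $\rho$-fixed and $\mu([C]_m,\rho)$ is simply the sum of $\rho$-weights on any basis of $H^0(C,\omega_C^{\otimes 2m})$. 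Use the restriction sequence
$$
0 \to H^0(\omega_C^{\otimes 2m}(-T)) \to H^0(\omega_C^{\otimes 2m}) \to H^0(\omega_C^{\otimes 2m}|_T) \to 0;
$$
the left-hand term is supported on $D$ and $\rho$-invariant. The right-hand term pulls back to $\omega_{\tilde T}^{\otimes 2m}(2m\tilde p + 4im\tilde q)$, and the subspace coming from $\omega_T^{\otimes 2m}$ is cut out by the Gorenstein adjunction at the cusp: the admissible Laurent expansions at $\tilde q$ have $t$-exponents in $S - 4im$, excluding exactly the $i$ odd powers $\{-4im + 2\ell - 1 : 1 \leq \ell \leq i\}$. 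A diagonalizing basis consists of admissible monomials $t^k(dt)^{2m}$, each of $\rho$-weight $k$ (up to the overall centering at $\tilde p$). Summing $k$ over the admissible exponent set and subtracting the $SL$-centering yields the closed form
$$
\mu([C]_m,\rho) = \tfrac13(m-1)\bigl((4i^2-8i+2)m - 3i^2\bigr),
$$
and substituting (\ref{E:anm}) with $j = 2i+1$ produces the factored $\alpha$-expression, from which the stability trichotomy for $C$ reads off by sign.

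Extensions, and main obstacle. For the Weierstrass tail $C'$, the Weierstrass semigroup at $p' \in T'$ is again $S$, so filtering $H^0(\omega_{C'}^{\otimes 2m})$ by vanishing order at $p'$ yields a basis with the same $\rho$-character as for $C$; equivalently $[C']_m \in B_\rho([C]_m)$ and $\mu$ is constant along basins, giving $\mu([C']_m,\rho) = \mu([C]_m,\rho)$. For $C''$, the same $A_{2i}$-semigroup analysis now applies on the $D$-side of the partial normalization $\nu$: sections of $\omega_{C''}^{\otimes 2m}$ pull back to sections of $\omega_D^{\otimes 2m}(4im\cdot p)$ with admissible Laurent exponents at $p$ in $S - 4im$, giving a diagonalizing basis for $\rho$. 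Viewed inside the equivariant one-parameter family that collapses the tail $T$ onto the $A_{2i}$ cusp, $C$ and $C''$ appear as the two opposite-sign $\rho$-fixed limits, and the admissible exponents on the two sides of the node are complementary across zero in the common ambient weight decomposition of $H^0$, so the weight sums come out as negatives of one another; this yields $\mu([C'']_m,\rho) = -\mu([C]_m,\rho)$ and the reversed stability trichotomy. The technically demanding step is the exponent count for $C$: identifying precisely which Laurent monomials at the $A_{2i}$ cusp survive the Gorenstein adjunction, then summing them to produce exactly the constants $4i^2-8i+2$ and $3i^2$ so that the $\alpha$-expression factors cleanly through $(\alpha-\alpha_{2i+1})$ with $\alpha_{2i+1}$ matching the critical value from Proposition~\ref{P:main1}. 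A slip by a linear-in-$i$ term would destroy this match, and the entire strategic value of the proposition depends on it.
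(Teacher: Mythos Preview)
Your approach is correct in outline but genuinely different from the paper's. The paper works in the ambient projective space: it writes down an explicit parametrization of the rational cuspidal tail $T$ (called $R$ there) by
\[
[s,t]\mapsto[s^{4i-2},s^{4i-4}t^2,\dots,s^{2i-2}t^{2i},s^{2i-3}t^{2i+1},\dots,t^{4i-2}],
\]
reads off the one-parameter subgroup $\rho$ from the resulting $t$-exponents, and computes the weight $w_{R,\rho}(m)$ of the degree-$m$ monomials not in the initial ideal by a direct counting lemma (each parameter monomial $s^{m(4i-2)-k}t^k$ with $k$ in the allowed range appears exactly once). The index is then assembled from the standard Hilbert--Mumford formula
\[
\mu([C]_m,\rho)=\frac{mP(m)}{N+1}\,r-w_{R,\rho}(m)-w_{D,\rho}(m),
\]
with the centering term and the trivial $D$-contribution written out explicitly. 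For the basin of attraction the paper analyzes the $\rho$-weights on the coefficients of the miniversal deformation $x_{2i+1}^2=x_1^{2i+1}+c_{2i-1}x_1^{2i-1}+\cdots+c_0$, finds them all positive, and then invokes Hassett's local stable reduction to conclude that anything in the basin has a hyperelliptic tail attached at a Weierstrass point; the statement for $C''$ is obtained by reversing the direction of $\rho$.

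Your route is the intrinsic one via the character of $\rho$ on $H^0(C,\omega_C^{\otimes 2m})$ and the $A_{2i}$ semigroup $S=\langle 2,2i+1\rangle$, which the paper itself points to (in the remark at the end of the section) as the method of Alper--Fedorchuk--Smyth. It is cleaner conceptually, unifies the three curves $C$, $C'$, $C''$ through the single semigroup $S$, and avoids initial-ideal bookkeeping; the price is that you must be careful with the $SL$-centering, and your treatment of $C'$ is a bit loose. Since $\rho$ does not lie in $\mathrm{Aut}(C')$ when $T'$ is smooth of genus $i\ge 2$, there is no $\rho$-action on $H^0(\omega_{C'}^{\otimes 2m})$ to speak of; what the semigroup match actually gives you is a basis whose leading terms under $\rho$ coincide with those for $C$, hence $\lim_{t\to 0}\rho(t)\cdot[C']_m=[C]_m$, which is exactly the basin condition you then quote. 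Stated that way the argument is fine and in fact more direct than the paper's deformation-plus-stable-reduction route.
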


We also consider the stability of genus $i$ bridges.  Genus one bridges (elliptic bridges) were introduced in \cite{HH2}, where we showed that they are Hilbert unstable and Chow strictly semistable when they are bicanonically embedded.
Bridges of higher genera also become Hilbert unstable as $\a$ decreases further:

\begin{proposition}\label{P:bridges} Let $C = D\cup_{p_1,p_2} B$ be a bicanonical genus $i$ bridge, $i\ge 2$, such that $B = R_1\cup R_2$ is a union of two rational curves meeting in one point forming an $A_{2i+1}$ singularity $y^2 = x^{2i+2}$. Then there exists a one parameters subgroup $\rho$ of $SL_{3g-3}$ coming from ${\rm Aut}(R)$ such that
\[
\mu([C]_m, \rho) =  \frac16(m-1)(4i(i-1)m-3i(i+1)).
\]
Using the relation (\ref{E:anm}), \S\ref{S:GIT}, we may write it in terms of $\a$  and $j = 2i+2$:
\[
\begin{array}{cllll}
\mu([C]_m, \rho) & = &
 \frac{(j-2)(17\a-8)}{16(7-10\a)^2}\left(8(j+1)\a - (3j+16)\right) \\
 & = & \frac{(j^2-j-2)(17\a-8)}{2(7-10\a)^2}(\a - \a_j).
 \end{array}
 \]
Thus $C$ is Hilbert stable with respect to $\rho$ for  $\a > \a_{2i+2}$, strictly semistable for $\a = \a_{2i+2}$ and unstable for $\a < \a_{2i+2}$.

Retain $C$, $\rho$ and analyze $B_\rho([C]_m)$. We have:

\begin{enumerate}
\item Let $C' = D \cup_{p'_1,p'_2} B'$ be a bicanonical genus $i$ bridge, $i\ge 2$, where $B'$ is a hyperelliptic
curve of genus $i$ and $p'_1, p'_2$ are interchanged by the hyperelliptic involution of $B'$.
Then
\[
\mu([C']_m, \rho) = \mu([C]_m, \rho)
\]
 In particular $C'$ is $\a$-Hilbert stable with respect to $\a
> \a_{2i+2}$, strictly semistable for $\a = \a_{2i+2}$ and unstable for $\a < \a_{2i+2}$.  Such a subcurve $B'$ (and sometimes
$C'$ itself by abusing terminology) is called a \emph{hyperelliptic genus $i$ bridge}.

\item Let $C''$ be a bicanonical genus $g$ curve obtained from $C'$ by replacing $B'$ by
an $A_{2i+1}$ singularity. That is, $C''$ is of genus $g$, has $A_{2i+1}$ singularity at $p''$ and admits a
partial normalization $\nu : (D, p'_1,p'_2) \to (C'', p'')$ mapping $p'_i$ to $p''$. Then
\[
\mu([C'']_m, \rho) = -\mu([C']_m, \rho)
\]
 In particular $C''$ is $\a$-Hilbert unstable with respect to $\a
> \a_{2i+2}$, strictly semistable for $\a = \a_{2i+2}$ and stable otherwise.
\end{enumerate}

\end{proposition}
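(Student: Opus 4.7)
The plan is to follow the template established for the genus $i$ tail case in Proposition~\ref{P:tails}, with appropriate modifications for the bridge geometry. First I would construct the one parameter subgroup $\rho : \bG_m \to SL_{3g-3}$ coming from $\text{Aut}(R)$. On $B = R_1\cup R_2$, there is a natural $\bG_m$-action fixing the $A_{2i+1}$ singular point and the two attaching points $p_1\in R_1$, $p_2\in R_2$: one chooses coordinates so that the $p_k$ are the $\infty$'s and the singular branches are the $0$'s, and lets $\bG_m$ act with opposite weights on $R_1$ and $R_2$ (and trivially on $D$). Extending to the bicanonical embedding via the induced action on $H^0(C,\omega_C^{\otimes 2})$ and subtracting the average weight yields the required 1-PS in $SL_{3g-3}$.

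Next I would compute the $\rho$-weight of $[C]_m$ using a basis of $H^0(C,\omega_C^{\otimes 2m})$ that diagonalizes the action. Sections supported on $D$ contribute zero; sections supported on $B$ decompose into weight eigenspaces whose weights are determined by the orders of vanishing at $p_1, p_2$ and at the singular branch, the $A_{2i+1}$ singularity imposing a conductor contribution of order $i+1$. Summing weights, applying the standard normalization of the form $\mu([C]_m,\rho) = m\,w_{\mathrm{tot}}\,h_C(m) - (3g-3)\,w(m)$, and simplifying should yield
\[
\mu([C]_m,\rho) = \frac{1}{6}(m-1)\bigl(4i(i-1)m - 3i(i+1)\bigr).
\]
Substituting the relation (\ref{E:anm}) between $m$ and $\a$ and rewriting in terms of $j=2i+2$ produces the stated $\a$-form, and a sign analysis gives the trichotomy of $\a$-stability.

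For the basin-of-attraction statements I would view $[C]_m$ and $[C'']_m$ as the two $\rho$-fixed limits of $[C']_m$: a one-parameter family degenerating $B'$ to $R_1\cup R_2$ yields $[C]_m = \lim_{t\to 0}\rho(t)\cdot [C']_m$, while crushing $B'$ to an $A_{2i+1}$ singularity yields $[C'']_m = \lim_{t\to 0}\rho^{-1}(t)\cdot [C']_m$. Part (1) follows because the Hilbert-Mumford index of a point equals the $\rho$-weight of its $\rho$-limit, hence $\mu([C']_m,\rho) = \mu([C]_m,\rho)$. For part (2), the weight decomposition of a lift of $[C']_m$ is \emph{symmetric} about zero thanks to the hyperelliptic involution of $B'$: upon degeneration this involution becomes the swap of $R_1$ and $R_2$, i.e.\ $\rho\mapsto\rho^{-1}$, so the maximum and minimum weights are negatives of one another, giving $\mu([C'']_m,\rho) = -\mu([C']_m,\rho)$.

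The main obstacle will be the explicit weight bookkeeping of step two: enumerating bicanonical sections of $C$ by prescribed vanishing orders at two attaching points and at the singular branch of $B$ is combinatorially more intricate than the single-node tail case, and one must check that the resulting sum simplifies to the claimed quadratic in $m$. The basin-of-attraction arguments are largely formal once $C$ and $C''$ are identified as the two $\rho$-limits of $C'$ and the hyperelliptic symmetry of weights is invoked; however, realizing the $t\to\infty$ degeneration as an explicit flat family in the bicanonical Hilbert scheme, producing the $A_{2i+1}$ singularity on $D$ at the fused point, requires some care.
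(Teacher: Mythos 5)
Your overall template (an explicit 1-PS from the automorphisms of the bridge, a monomial weight count, conversion to $\a$ via (\ref{E:anm}), and basin-of-attraction arguments for (1) and (2)) is the paper's, but your key input is wrong: the one-parameter subgroup you construct does not come from ${\rm Aut}(R)$. An automorphism of $R=R_1\cup R_2$ must preserve the $A_{2i+1}$ gluing: writing the local ring at the singularity inside the normalization as pairs $(f_1(t_1),f_2(t_2))$ with $f_1\equiv f_2 \pmod{t^{i+1}}$ (via $x\mapsto(t_1,t_2)$, $y\mapsto(t_1^{i+1},-t_2^{i+1})$), a scaling $t_1\mapsto\lambda t_1$, $t_2\mapsto\lambda^{-1}t_2$ destroys this identification as soon as $i\ge 1$. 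So ``opposite weights on $R_1$ and $R_2$'' does not define a $\bG_m$ in ${\rm Aut}(C)$, the point $[C]_m$ is not fixed by your $\rho$, and both the index computation and the later basin arguments (which need $[C]_m$ to be the $\rho$-fixed point to which $[C']_m$ and $[C'']_m$ flow) collapse. The correct $\rho$, as in \S\ref{S:bridges}, scales the two branches the same way: in the explicit parametrization the coordinate weights are $(0,1,\dots,2i-1,\,i+1,\dots,2i-1,\,2i,\dots,2i)$, with the singular point at the bottom weight $0$ and the two attaching points together with the span of $D$ at the common top weight $2i$. The weight bookkeeping is then done not by a general vanishing-order count but by showing that each weight is carried by at most two degree-$m$ monomials outside the initial ideal and comparing the total with $P_R(m)=4im+1-i$, which gives $w_{R,\rho}(m)=4i^2m^2+2im-\binom{i+1}{2}$ and hence the stated quadratic in $m$.

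Two further points in your basin analysis need repair. First, $[C'']_m$ is not a $\rho$-fixed point (for general $D$ the curve $C''$ has finite automorphism group), so it cannot be realized as $\lim_{t\to 0}\rho^{-1}(t)\cdot[C']_m$; the correct statement, and the one the paper uses, is that $[C'']_m$ lies in the basin $B_{\rho^{-1}}([C]_m)$, i.e.\ it flows to $[C]_m$ under $\rho^{-1}$, whence its index is the negative of $\mu([C]_m,\rho)=\mu([C']_m,\rho)$. Your symmetry argument (``the hyperelliptic involution becomes the swap of $R_1$ and $R_2$, i.e.\ $\rho\mapsto\rho^{-1}$'') is tied to the antidiagonal choice; for the correct $\rho$ the branch swap commutes with $\rho$ and yields nothing. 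Second, in (1) it is not automatic that the curves in $B_\rho([C]_m)$ are exactly the hyperelliptic bridges with $p'_1,p'_2$ conjugate: the paper obtains that the basin contains arbitrary smoothings of $a_1$ (and no smoothing of the nodes $a_0,a_2$) from the signs of the $\rho$-weights on the versal deformation of the $A_{2i+1}$ singularity, and then identifies the basin members using Hassett's local stable reduction \cite[\S~6.2.1]{Has00} together with separatedness of $\overline{\mathcal M}_g$ (\S\ref{S:boa-bridges}, \S\ref{S:boa-ctails}). With the 1-PS corrected and these two steps supplied, your outline coincides with the paper's proof.
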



\begin{ack} The author thanks Brendan Hassett for sharing his ideas on the log minimal model program for the moduli of stable curves. Especially, the results in \S\ref{S:tails} were developed from his stability computation of genus two tails. The author thanks Sean Keel for pointing out inaccuracies in a preliminary version, and Jarod Alper and Maksym Fedorchuk for patiently explaining their heuristics of predicting critical values and change in singularity loci. He also greatly benefited from useful conversations with Valery Alexeev, Angela Gibney, Young-Hoon Kiem, Yongnam Lee, Ian Morrison, Jihun Park, David Smyth and Dave Swinarski. 
\end{ack}

\section{Intersection with vital curves}\label{S:intersection}
We shall use the isomorphism $\Hg \simeq \widetilde M_{0,2g+2} := \M_{0,2g+2}/\mfk S_{2g+2}$ throughout the paper, which grants us access to the  intersection theory on $\widetilde M_{0,n}$ worked out comprehensively in \cite{KMc}. Recall that a {\it vital curve} is an irreducible component of the locus in $\widetilde M_{0,2g+2}$ consisting of pointed curves with $\ge 2g-2$ nodes,
and any effective curve is conjectured to be an effective sum of vital curves ({\it Fulton Conjecture}, \cite{GKM, KMc, Gib}) which has been confirmed positively for $g \le 11$. A vital curve may be determined by $\{a, b, c, d\} \subset \ZZ_+$
such that $a+b+c+d = 2g+2$ \cite[4.1]{KMc}, and its intersection with a divisor $\sum_{k=2}^{g+1} r_k \widetilde B_k$ is given by
\begin{equation}\label{E:cdotb}
r_{a+b} + r_{b+c} + r_{a+c} - r_a - r_b - r_c - r_d \tag{$\dag$}
\end{equation}
\cite[1.3,4.4]{KMc}.

In this section, we shall derive the conjectural critical value formula (\ref{E:critical-values}) and the log discrepancy formula (\ref{E:Lai}) by applying Hassett-Keel principle.
 Recall that $L_\a^{[j]}$ denotes the pullback under $f_{g, \le j}= f_{g,j}\circ f_{g,j-1} \circ \cdots \circ f_{g,3}$
of the cycle theoretic image of $L_\a$ on $\Hg(\a_j)$.
Let $r_k^{[j]}$ denote the $\widetilde B_k$ coefficients of $L_\a^{[j]}$ i.e.
\[
L_\a^{[j]} = \sum_{k=2}^{g+1} r_k^{[j]} \widetilde B_k, \quad L_\a^{[2]} := L_\a.
\]
For notational convenience we also let $r_k$ denote $r_k^{[2]}$.
To prove Proposition~\ref{P:main1}, we shall use induction on $j$. Note that the statement is trivially true for $j = 2$. Assume that it is true for $j-1$. Hassett-Keel principle asserts that there exists a birational contraction $f_{g,j}$ that contracts the divisor $\widetilde B_j$ (image of it in $\Hg(\a_{j-1})$, to be more precise), so the extremal ray for $f_{g,j}$ is the class of the vital curve $\{a, b, j-a-b, 2g+2-j\}$. Let $C_{a,b,c}$ denote the vital curve associated to $\{a,b,c,2g+2-a-b-c\}$. Intersecting $C_{a,b,j-a-b}$ with $L_{\a}^{[j-1]}$, we obtain
\[
\begin{array}{c}
r_{a+b} + r_{j-a} + r_{j-b} - r_a - r_b - r_{j-a-b} - r_j \\
=
\left(\binom{a+b}2+\binom{j-a}2+\binom{j-b}2-\binom a2 - \binom b2 - \binom{j-a-b}2 \right) r_2 - r_j \\
\end{array}
\]
which after a simple combinatorics yields
\[
\binom j2 r_2 - r_j.
\]
Since the intersection of  $\widetilde B_j$ with $\{a,b,j-a-b,j\}$ is $-1$, the discrepancy formula is
\[
L_\a^{[j]} = L_\a^{[j-1]} + \left(\binom j2 r_2 - r_j\right) \widetilde B_j.
\]
Note that change occurs only in the coefficient of $\widetilde B_j$:
\[
r_j^{[j]} = \left(\binom j2 r_2 - r_j\right) + r_j = \binom j2 r_2.
\]
Hence the coefficient of the exceptional $\widetilde B_j$ is given by
\[
\begin{array}{lll}
c_j &=& r_j^{[j]} - r_{j}^{[j-1]} = \binom j2 r_2 - r_{j} \\
&=& \begin{cases}
\binom j2 \left(\frac{13}{4g+2}g + 2(\a-2)\right) - \frac{13}{4g+2}\frac{j-1}2\left( g - \frac{j-1}2\right) - \frac{\a-2}2, \quad \mbox{$j$ is odd} \\
\binom j2 \left(\frac{13}{4g+2}g + 2(\a-2)\right) - \frac{13}{4g+2}\frac{j}2\left( g +1 - \frac{j}2\right) - 2(\a-2), \quad \mbox{$j$ even}
\end{cases} \\
 &=& \begin{cases}
 \left( j(j-1) - \frac12\right) \a - \frac{3j^2 + 10 j - 21}{8}, \quad \mbox{$j$ odd}\\
 (j-2)\left[(j+1)\a - \left(\frac{3j}8+2\right)\right], \quad \mbox{$j$ even}
 \end{cases}
\end{array}
\]
Setting $c_j = 0$ and solving it for $\a$, we obtain the formula for critical values
(\ref{E:critical-values}) of Proposition~\ref{P:main1}.

 Now we are ready to prove Proposition~\ref{P:main1}.\,(3). Since $f_g: \Hg \to \Bg$  and $f_{g, \le g+1}: \Hg \dashrightarrow \Hg(\a_{g+1})$ both contract the boundary divisors $\widetilde B_3, \widetilde B_4, \dots, \widetilde B_{g+1}$ and no other divisors, $\Hg(\a_{g+1})$ and $\Bg$ are isomorphic away from a codimension two locus. We shall prove that an ample divisor on $\Hg(\a_{g+1})$ is pulled back to an ample divisor on $\Bg$. By the log discrepancy formula, the distinguished ample divisor on $\Hg(\a_{g+1})$ pulls back to $r_2 \sum_{k=2}^{g+1} \binom k2 \widetilde B_k$.

On the other hand, recall the geometric line bundles $L_{\vec{\bf x}}$ on $\bar M_{0,n}$ pulled back from the GIT quotients $(\bP^1)^{n}/\!\!/_{\vec{\bf x}}SL_2$ where ${\bf x} = (a_1, \dots, a_{n})$ denotes the linearization $\cO(a_1, \dots, a_n)$  used in the GIT construction \cite{AS}. Let $\varpi$ denote the projection
\[
(\bP^1)^n \to (\bP^1)^n/\mfk S_n \simeq \bP^n
\]
and note that $\varpi^*(\cO_{\bP^n}(1)) \simeq \cO(1,1,\dots,1)$. The natural ample line bundle on the GIT quotient $\Bg$ comes from $\cO_{\bP^{2g+2}}(1)$. Hence its pullback to $\bar M_{0,2g+2}$ is a positive rational multiple of the symmetric linearization $L_{(\frac1{g+1}, \dots, \frac1{g+1})}^{\otimes 2g+2}$, which is equal to
\begin{equation}\label{E:lin-bg}
\sum_{k=2}^{g+1} \frac{2k(k-1)}{2g+1} B_k.
\end{equation}
where $B_k = \sum_{|S| = k} D_{S,S^c}$ and $D_{S,S^c}$ is the divisor whose general pointed curve has labels of $S$ in one component and labels of $S^c$ on the other.
Kiem and Moon also made a similar computation \cite[Lemma~5.3]{KiemM}.
 The assertion now follows since (\ref{E:lin-bg}) is a symmetric divisor that descends to a multiple of $\sum_{k=2}^{g+1} \binom k2 \widetilde B_k$ on $\widetilde M_{0,2g+2}$.

\section{Modularity principle of Hassett and Keel}\label{S:GIT}
Hassett-Keel principle for $\Hg$ asserts that the intermediate log canonical models $\Hg(\a)$  are moduli spaces themselves parametrizing curves with prescribed singularities. These singularities are determined by the divisors contracted by $\Hg \dashrightarrow \Hg(\a)$. For instance, consider a general curve in $\widetilde B_{2s+1}$ which is a Weierstrass genus $s$ tail. Recall that it is a genus $g$ curve $D\cup_p T$ consisting of a genus $g-s$ curve $D$ and genus $s$ hyperelliptic curve $T$ meeting in one node such that the point $p$ of attachment is a Weierstrass point of $T$. When $\widetilde B_{2s+1}$ is contracted, such a curve $D \cup_p T$ must be replaced in the resulting moduli space $\Hg(\a_{2s+1})$ by a genus $g$ curve that depends only on $(D, p)$, and a natural candidate for the replacement is the curve $C'$ obtained from $D\cup_p T$ by {\it replacing} $T$ with an $A_{2s}$ singularity $y^2 = x^{2s+1}$ at $p$.\footnote{ This is a natural generalization of the picture  we described in \cite{HH1, HL3} in which the divisorial contraction $\Mg \to \Mg(9/11)$ (resp. $\Hg \to \Hg(9/11)$) replaces an elliptic tail with an ordinary cusp.} To explain how $\a$ appears as a parameter in the variation of GIT quotients of the Hilbert scheme, we recall the natural linearization of the Hilbert scheme. Let $Hilb_{g,\nu}$ denote the closure of the locus of $\nu$-canonical smooth genus $g$ curves in the Hilbert scheme $Hilb^{P}(\bP^N)$ parametrizing subschemes of $\bP(V) := \bP^N$ of Hilbert polynomial $P(m) = (2\nu m-1)(g-1)$. For $m \gg 0$, $Hilb_{g,\nu}$ admits embeddings
\[
\phi_m: Hilb_{g,\nu} \inj Gr(P(m), S^m V^*) \inj \bP(\bigwedge^{P(m)} S^m V^*)
\]
such that $\phi_m^*(\cO(+1))$ is a positive rational multiple of
\begin{equation}\label{E:lin}
(m-1)((6m\nu^2-2m\nu-2\nu+1)\lambda -
                \frac{m\nu^2}{2}\delta)
\end{equation}
where $\lambda$ is the determinant of the Hodge bundle and $\delta$ is the divisor of the singular curves \cite[P28. Equation~(5.3)]{HH2}. For a $\nu$-canonical curve $C \subset \bP^N$, we let $[C]_m$ denote the image in $ \bP(\bigwedge^{P(m)} S^m V^*)$ of $[C]\in Hilb_{g,\nu}$ under $\phi_m$ and call it the $m$th Hilbert point of $C$.

\begin{definition} $C$ is said to be $m$-Hilbert stable (resp. semistable, unstable) if $[C]_m$ is GIT stable (resp. semistable, unstable) with respect to the natural $SL(V)$ action on $ \bP(\bigwedge^{P(m)} S^m V^*)$.
\end{definition}

On the other hand, $K_{\overline{\mathcal M}_g} + \a \delta$ pulls back by the quotient map $Hilb_{g,\nu} \dashrightarrow \Mg$ to
$
13 \lambda - (2 - \a) \delta
$
which is proportional to (\ref{E:lin}) when $\nu = 2$ and
\begin{equation}\label{E:anm}\tag{\dag\dag}
m = \frac{3(2-\a)}{2(7-10\a)}, \quad \a = \frac{14m-6}{20m-3}.
\end{equation}
For meaningful values of $m$ and $\a$, this is a one-to-one order preserving correspondence between them. We say that a $\nu$-canonical curve $C \subset \bP^N$ is {\it $\a$-Hilbert stable (resp. semistable, unstable)} if it is $m$-Hilbert stable (resp. semistable, unstable) for the corresponding $m$.\footnote{This of course leaves  room for confusion. Rule of thumb is, $m$ is usually  an integer, such as `$6$-Hilbert stable' means $[C]_6$ is stable, and `$2/3$-Hilbert stable' means again $[C]_6$ is stable since $m(2/3) = 6$.}

\subsection{Cuspidal Tails}\label{S:tails}
We first consider the curves in $\widetilde B_{j}$, $j = 2b+1, b \ge 2$.
Let $R$ be a rational curve of genus $b$ with a single cusp $q$ whose local analytic equation is $y^2 = x^{j}$. Let $C = D\cup_p R$ be a bicanonical curve of genus $g$ consisting of $R$ and a genus $g-b$ curve $D$ meeting in a single node $p$. Restricting $\cO_C(1)$ to $R$ (resp. $D$), we find that it is of degree $4b-2$ (resp. $4g-4b-2$) and contained in a linear subspace of dimension $3b-1$ (resp. $3g-3b-1$). We can and shall choose coordinates such that
$R \subset \{x_{3b-1} = x_{3b} = \cdots = x_{3g-4} = 0\}$ and $D \subset \{x_0 = x_1 = \cdots = x_{3b-3} = 0\}$. $R$ may be parametrized by mapping $[s,t]$ to
\[
[s^{4b-2}, s^{4b-4}t^2, s^{4b-6}t^4, \cdots, s^{2b-2}t^{2b}, s^{2b-3}t^{2b+1}, s^{2b-4}t^{2b+2}, \cdots, t^{4b-2}]
\]
so that $R$ has a single cusp $ x_{b+1}^{2} = x_1^{2b+1}$ at $ q = [1, 0, \dots, 0]$, where we abused notation and let $x_1$ and $x_{b+1}$ denote their images in the completed local ring at $q$. Let $\rho$ denote the one-parameter subgroup with weights $$(0,2,4,6,\dots,2b, 2b+1, 2b+2, \dots, 4b-2, 4b-2, \dots, 4b-2).$$
The sum of these weights is $r := \binom{4b-1}2 - b^2 + (4b-2)(3g-3b-2)$.
We shall fix the $\rho$-weighted GLex order on the monomials.
\begin{lemma} The sum $w_{R,\rho}(m)$ of the weights of the degree $m$ monomials of  in $x_0, \dots, x_{3b-2}$ that are not in the initial ideal of $R$ is
\[
w_{R,\rho}(m) = (8b^2-8b+2)m^2 + (2b-1)m - b^2.
\]
\end{lemma}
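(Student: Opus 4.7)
The plan is to identify standard monomials (with respect to the $\rho$-weighted GLex order) with the distinct $\rho$-weights realized by degree-$m$ monomials in $x_0,\dots,x_{3b-2}$, and then carry out a direct combinatorial sum. Under the parametrization, each coordinate $x_k$ has image $s^{4b-2-w_k}t^{w_k}$ whose $t$-exponent is exactly its $\rho$-weight $w_k$, so a degree-$m$ monomial $x_{k_1}\cdots x_{k_m}$ pulls back to $s^{m(4b-2)-W}t^{W}$ with $W=\sum_i w_{k_i}$; two such monomials have the same pullback iff they have the same $\rho$-weight. Since the $\rho$-weighted GLex order selects exactly one representative per weight class, the degree-$m$ standard monomials are in bijection with the set of realizable weights
\[
\mathcal{W}_m:=\bigl\{\,w_{k_1}+\cdots+w_{k_m}\ :\ 0\le k_i\le 3b-2\,\bigr\}\subset\{0,1,\dots,m(4b-2)\},
\]
and $w_{R,\rho}(m)=\sum_{W\in\mathcal{W}_m}W$.

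The heart of the argument is the identification $\mathcal{W}_m=\{0,1,\dots,m(4b-2)\}\setminus\{1,3,5,\dots,2b-1\}$. The exclusion is automatic: the weight set $S=\{0,2,\dots,2b,\,2b+1,\dots,4b-2\}$ sits inside the numerical semigroup $\langle 2,2b+1\rangle$ of the cusp $y^{2}=x^{2b+1}$, whose gaps are precisely the $b$ odd numbers $1,3,\dots,2b-1$. For the reverse inclusion, I would write $W=q(4b-2)+r$ with $0\le r<4b-2$ and split into cases. When $r=0$ or $r\in S\setminus\{0\}$, use $q$ copies of $4b-2$ plus at most one copy of $r$; the bound $W\le m(4b-2)$ forces $q\le m-1$ whenever $r>0$, so the decomposition has $\le m$ summands after padding with $0$. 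When $r\in\{1,3,\dots,2b-1\}$, first note that $q\ge 1$ (otherwise $W=r$ is itself a gap, contrary to assumption), then verify the identity
\[
W=(q-1)(4b-2)+(2b+1)+(2b-3+r)
\]
in which $2b-3+r\in\{2b-2,2b,2b+2,\dots,4b-4\}\subset S$ for $b\ge 2$, yielding a decomposition with $q+1\le m$ summands. The main technical point is this ``gap residue'' case; the delicate step is the bookkeeping to keep the summand count $\le m$, which reduces once more to the elementary bound $q\le m-1$.

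Once $\mathcal{W}_m$ is pinned down, the computation is immediate:
\[
w_{R,\rho}(m)=\sum_{W=0}^{m(4b-2)}W\,-\,\sum_{k=1}^{b}(2k-1)=\frac{m(4b-2)\bigl(m(4b-2)+1\bigr)}{2}-b^{2},
\]
which expands to $(8b^{2}-8b+2)m^{2}+(2b-1)m-b^{2}$ using $(4b-2)^{2}/2=8b^{2}-8b+2$ and $(4b-2)/2=2b-1$. As a consistency check, $m=1$ gives $w_{R,\rho}(1)=7b^{2}-6b+1=\binom{4b-1}{2}-b^{2}$, exactly the portion of the total weight $r$ attributable to the coordinates $x_0,\dots,x_{3b-2}$ on $R$ computed in the paragraph preceding the lemma.
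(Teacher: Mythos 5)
Your proposal is correct, and it rests on the same basic reduction as the paper: identify the degree-$m$ standard monomials with the distinct $t$-exponents (equivalently, $\rho$-weights) of their pullbacks under the parametrization, and then sum those exponents. Where you differ is in how the exact set of occurring exponents is pinned down. The paper never determines the realizable weight set directly: it checks that standard monomials inject into the candidate list $\{0,2,\dots,2b\}\cup\{2b+1,\dots,m(4b-2)\}$, notes that this list has $m(4b-2)+1-b=P_R(m)$ elements, and concludes the correspondence is a bijection by comparing with the Hilbert polynomial of $R$. You instead prove surjectivity constructively: the semigroup containment $S\subset\langle 2,2b+1\rangle$ rules out the odd gaps $1,3,\dots,2b-1$, and the decomposition $W=(q-1)(4b-2)+(2b+1)+(2b-3+r)$, together with the bound $q\le m-1$, realizes every other value $\le m(4b-2)$ as a sum of at most $m$ elements of $S$ (your case check that $2b-3+r\in S$ for $b\ge 2$ is right). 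This buys a self-contained determination of $\mathcal{W}_m$ that does not invoke $P_R(m)$, at the cost of a short case analysis; the paper's counting is quicker but leans on that input. One sentence is worth adding to your write-up: the claim that GLex ``selects exactly one representative per weight class'' is only immediate in the at-most-one direction (two equal-weight monomials have equal pullbacks, so their difference lies in $I(R)$ and the larger is a leading term); the at-least-one direction needs either your own count $|\mathcal{W}_m|=P_R(m)$ matched against the number of standard monomials, or the observation that $R$ is $\rho$-invariant, so $I(R)$ is weight-homogeneous and the weight-$W$ graded piece of the degree-$m$ part of the coordinate ring is one-dimensional for each realizable $W$, hence contains exactly one standard monomial.
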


\begin{proof} In general, weight computation of this sort can be accomplished by using Gr\"obner basis, but in this case there is a more elementary solution since $R$ admits a parametrization. Let $P_R(m) = (4m-1)(b-1) + 2m$, the Hilbert polynomial of $R$.  A monomial of degree $m$ pulls back to one of the following $m(4b-2)+1-b$ monomials
\[
s^{m(4b-2)-i}t^i, \quad i = 0, 2, 4, \dots, 2b, 2b+1, 2b+2, \dots, m(4b-2).
\]
If $\prod_{i\in I,|I|=m} x_i$ and $\prod_{i \in J,|J|=m} x_i$ pull back to the same monomial, then $\prod_{i\in I,|I|=m} x_i - \prod_{i \in J,|J|=m} x_i$ is in the initial ideal $in_\rho(R)$ of the ideal of $R$ with respect to the $\rho$-weighted GLex order. It follows that each $s^{m(4b-2)-i}t^i$ appears at most once
among the pullbacks of degree $m$ monomials not in the initial ideal $in_\rho(R)$. Since $m(4b-2)+1-b$ equals $P_R(m)$, it has to appear in the set exactly once. Therefore,
\[
\begin{array}{clll}
w_{R,\rho}(m) & = & \sum_{k=0}^b 2k + \sum_{k=2b+1}^{m(4b-2)}k \\
& = & (8b^2-8b+2)m^2 + (2b-1)m - b^2.
\end{array}
\]
\end{proof}
On the other hand, the contribution from $D$ to the total weight is
\[
\begin{array}{clllll}
w_{D,\rho}(m) & = & (4b-2)m \cdot h^0((\cO_C^{\otimes 2}|D)^{\otimes m} (-p))\\
& = &(4b-2)m((4m-1)(g-b-1)+2m-1)
\end{array}
\]
since $\rho$ acts on $D$ trivially with constant weight $4b-2$. Combining these weights and the average weight, we obtain the Hilbert-Mumford index of $C$: \small
\[
\begin{array}{lllllllllll}
\mu([C]_m, \rho) & = & \frac{mP(m)}{N+1}r - w_{R,\rho}(m) - w_{D,\rho}(m)
 \\
& = & \frac13m(4m-1)(\frac12(4b-1)(4b-2) - b^2 + (4b-2)(3g-3b-2)) \\
&& - ((8b^2-8b+2)m^2 + (2b-1)m - b^2) \\
&& - ((4b-2)m((4m-1)(g-b-1)+2m-1))\\
& = & \frac13\left((4b^2-8b+2)m^2 + (-7b^2+8b-2)m + 3b^2\right) \\
& = & \frac13(m-1) ((4b^2-8b+2)m - 3b^2)
\end{array}
\]\normalsize
Using the relation (\ref{E:anm}) from \S\ref{S:GIT} and substituting $j = 2b+1$, we can rewrite it as
\[
\begin{array}{cllll}
\mu([C]_m, \rho) & = &
 \frac{17\a-8}{8(7-10\a)^2}\left((8j^2 - 8j - 4)\a - (3j^2+ 10 j - 21)\right) \\
 & = & \frac{(8j^2 - 8j - 4)(17\a-8)}{8(7-10\a)^2}(\a - \a_j).
 \end{array}
 \]
This completes the proof of Proposition~\ref{P:tails}.

\subsubsection{Basin of attraction of cuspidal tails}\label{S:boa-ctails}
The local versal deformation space of $q$ is given by
\[
x_{2b+1}^2 = x_1^{2b+1} + c_{2b-1} x_1^{2b-1} + c_{2b-2} x_1^{2b-2} + \cdots +c_0
\]
Since $\rho$ acts on $x_1$ and $x_{2b+1}$ with weights $2$ and $2b+1$ respectively, it acts on $c_i$ with positive weight $4b+2-2i$, $0 \le i \le 2b-1$. Hence the basin of attraction contains arbitrary smoothing of $q$. By considering the local stable reduction  \cite[\S~6.2.2]{Has00}, we can deduce that if $D\cup_p T$ is in the basin $B_{\rho}([C]_m)$, then $T$ must be hyperelliptic and $p$ is a Weierstrass point of $T$. Indeeed, consider the isotrivial family $\mathcal C \to B=\spec k[[t]]$ whose general member is $\rho(t). (D\cup_p T)$ and the special member is $C$. Stable reduction of $\mathcal C$ yields $\mathcal C' \to B'$, $B' \to B$ a finite covering, whose general member is isomorphic to that of $\mathcal C$ and the special member is $D\cup_{p'} T'$ where $T'$ is  hyperelliptic and $p'$ is a Weierstrass point of $T'$: This is precisely the content of\cite[\S~6.2.2]{Has00}. By the separability of $\overline{\mathcal M}_g$, it follows that  $T$ is hyperelliptic and $p$ is a Weierstrass point.

On the other hand, $\rho$ acts trivially on the local versal deformation of the node $p$ and the basin of attraction does not contain any smoothing of $p$.

 \begin{figure}[ht]
\centerline{\scalebox{0.6}{\psfig{figure=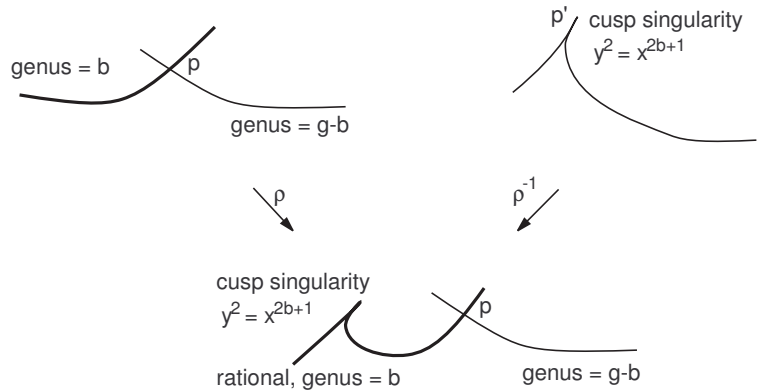}}}
\caption{Basin of attraction of cuspidal tail}
\label{F:cusps}
\end{figure}


\subsection{Nodal Bridges}\label{S:bridges}
In this section, we consider the case $\widetilde B_{j}$  for even $j = 2b+2, b \ge 2$.
Let $R = R_1\cup_{a_1} R_2$ be a curve of arithmetic genus $b$ consisting of two  rational curves meeting in an $A_{2b+1}$ singularity at $a_1$.
Let $C = R \cup_{a_0,a_2} D \subset \bP^{3g-4}$ be a bicanonical curve of genus $g$ consisting of $R$ and a genus $g-b-1$ curve $D$ meeting in two nodes $a_0$ and $a_2$.
By Riemann-Roch, we have
$h^0(R, \cO(1)|R) = 3b+1$ and $h^0(D, \cO(1)|D) = 3g-3-(3b-1)$ and may choose coordinates such that
\[
R \subset \{x_{3b+1} = x_{3b+2} = \cdots = x_{3g-3} = 0\}
\]
and
\[
D \subset \{x_0 = \dots = x_{3b-2} = 0\}.
\]
The rational components $R_i$ of $R$ are embedded as degree $2b$ curves
in $\bP^{N-3b} = \{x_{3b+1} = \cdots = x_N = 0\} \subset \bP^N$, and
can be parametrized as follows so that they meet in an $A_{2b+1}$ singularity: we map $(s_i, t_i)$ to
\small
\[
\begin{tabular}{|c|c|c|c|c|c|c|c|c|c|c|}
    \hline
   $ x_0 $ & $ \dots $ & $ x_b $ & $ x_{b+1} $ & $ \dots $ & $ x_{2b-1} $ & $ x_{2b} $ & $ \dots $ & $ x_{3b-2} $ & $ x_{3b-1} $ & $ x_{3b}$
     \\
\hline
   $ s_1^{2b}  $ & $\dots $ & $s_1^{b}t_1^{b} $ & $ s_1^{b-1}t_1^{b+1} $ & $ \dots $ & $ s_1t_1^{2b-1}
$ & $ 0 $ & $ \dots  $ & $   0           $ & $ t_1^{2b} $ & $ 0 $\\
$ s_2^{2b} $ & $\dots $ & $s_2^{b}t_2^{b}  $ & $  0   $ & $ \dots
$ & $ 0   $ & $ s_2^{b-1}t_2^{b+1} $ & $ \dots $ & $ s_2 t_2^{2b-1} $ & $ 0        $ & $ t_2^{2b} $\\
    \hline
  \end{tabular}
\]
\normalsize
in $\{x_{3b+1} = \cdots = x_N = 0\}$.
In this coordinate system, we have
\[
\begin{array}{l}
a_0 = (\overbrace{0, \dots, 0}^{3b-2}, 1, 0, 0, \dots, 0) \\
a_2 = (0, \dots, 0, 0, 1, 0, \dots, 0) \\
a_1 = (1, 0, \dots, 0, 0, 0, \dots, 0).
\end{array}
\]
Let $\rho$ be the 1-PS with weights $$(0,1,2,3, \dots, 2b-1, b+1, b+2, \cdots, 2b-1, 2b, 2b \dots, 2b).$$
The sum of the weights of $\rho$ is
\[
\binom{2b+1}2 + \binom{b+1}2 + b^2 + 2b(3g-3b-4).
\]

\begin{lemma}
The total $\rho$-weight $w_{R,\rho}(m)$ of the degree $m$ monomials in $x_0, \dots, x_{3b}$ not in the initial ideal with respect to $\rho$-weighted GLex is
\[
w_{R,\rho}(m) = 4b^2 m^2  + 2b m - \binom{b+1}{2}
\]
\end{lemma}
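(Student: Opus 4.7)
The plan is to determine a monomial basis of $(S/I(R))_m$ adapted to $\rho$, imitating the strategy used for the cuspidal tail but accommodating the two components and their higher-order contact at $a_1$.

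First I would pin down the gluing. The local equation $y^2 - x^{2b+2} = (y - x^{b+1})(y + x^{b+1})$ at $a_1$ shows that $R_1$ and $R_2$ meet with $(b+1)$-fold contact, so the normalization short exact sequence yields $P_R(m) = (2bm+1)+(2bm+1)-(b+1) = 4bm - b + 1$. Working in the affine chart $s_i = 1$, a pair $(f_1(t_1), f_2(t_2)) \in H^0(R_1, \cO(m)) \oplus H^0(R_2, \cO(m))$ descends to a section in $H^0(R, \cO(m))$ precisely when $f_1 \equiv f_2 \pmod{t^{b+1}}$. A natural basis of $H^0(R, \cO(m))$ is therefore
\[
\{(s_1^{2bm-k}t_1^k, s_2^{2bm-k}t_2^k)\}_{0 \le k \le b} \cup \{(s_1^{2bm-k}t_1^k, 0)\}_{b+1 \le k \le 2bm} \cup \{(0, s_2^{2bm-k}t_2^k)\}_{b+1 \le k \le 2bm},
\]
which has cardinality $P_R(m)$.

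Next I would exploit the key feature of the weight choice: on each component $R_i$ the torus $\rho$ scales $t_i$, so the $\rho$-weight of any monomial $M$ in $x_0, \dots, x_{3b}$ equals the $t_i$-exponent of $M|_{R_i}$ whenever this restriction is nonzero. Each basis element above can then be realized by a monomial of $\rho$-weight exactly $k$: the shared section $(t_1^k, t_2^k)$ for $k \le b$ by $x_0^{m-1}x_k$; the $R_1$-supported section $(t_1^k, 0)$ by any monomial containing one of the $R_2$-vanishing variables $x_{b+1}, \dots, x_{2b-1}, x_{3b-1}$ whose $t_1$-exponent is $k$; and symmetrically for the $R_2$-supported sections. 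These $P_R(m)$ monomials are linearly independent modulo $I(R)$ (their images are the prescribed basis), and each is minimum-weight within its $\rho$-equivalence class, hence they exhaust the complement of $in_\rho(R)$ in degree $m$.

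Summing the weights gives
\[
w_{R,\rho}(m) = \sum_{k=0}^{b} k + 2\sum_{k=b+1}^{2bm} k = 2\binom{2bm+1}{2} - \binom{b+1}{2} = 4b^2 m^2 + 2bm - \binom{b+1}{2},
\]
as claimed. The main subtle step is the gluing analysis: the $A_{2b+1}$ singularity imposes $b+1$ linear conditions, and precisely this number prevents overcounting. Indeed, for each $k \in \{b+1, \dots, bm\}$ the pair $(t_1^k, t_2^k)$ is realized both by a monomial in $x_0, \dots, x_b$ and by the sum of the one-sided realizations, so one of the three competing representatives lands in the initial ideal; since all three share the same $\rho$-weight, this reshuffling is invisible to the weight sum. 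Once the gluing is pinned down and the dimension count is in hand, the remaining computation reduces to a routine triangular-number identity.
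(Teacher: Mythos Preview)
Your proof is correct and follows essentially the same approach as the paper. Both arguments hinge on the observation that the $\rho$-weight of a monomial equals the $t_i$-exponent of its pullback to $R_i$ (when nonzero), and both arrive at the same count $P_R(m) = 4bm - b + 1$ and the same weight sum $\sum_{k=0}^{b} k + 2\sum_{k=b+1}^{2bm} k$. The only difference is one of framing: the paper argues from the monomial side, bounding directly how many standard monomials can have each weight $k$ (one for $k\le b$, two for $b+1\le k\le 2bm$) and then matching the total against $P_R(m)$; you argue from the section side, writing down an explicit basis of $H^0(R,\cO(m))$ via the gluing condition $f_1\equiv f_2 \pmod{t^{b+1}}$ and then lifting each basis element to a monomial. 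Your final paragraph correctly identifies why the specific choice of standard monomials is immaterial to the weight sum---any relation among monomials with the same image lies in a single $\rho$-weight class since $\rho$ comes from $\mathrm{Aut}(R)$ and hence $I(R)$ is $\rho$-homogeneous---so the ``reshuffling'' remark is exactly the point.
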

\begin{proof} Let $\cal S$ denote the monomials of degree $m$ in $x_0, \dots, x_{3b}$ not in the initial ideal $in_\rho(R)$. For $k \le b$, there are at most one monomial in $\cal S$ of weight $k$, namely $x_0^{m-1}x_k$. In general, we claim that $\cal S$ contains at most two monomials of weight $k$. Suppose that $M$ and $M'$ are monomials of weight $k$. If $k \ge bm+1$, then a monomial of weight $k$ must involve $x_j$ for some $j \ge b+1$ and hence vanishes on $R_1$ or $R_2$. If $M$ and $M'$ both vanish on the same component, then $M-M'$ is in the ideal of $R$ and one of them is in the initial ideal. Hence $\cal S$ may contain at most one monomial of weight $k$  that vanishes on $R_1$ but not on $R_2$, and another with the opposite vanishing property. If $k \le bm$, there could be a monomial $M''$ of weight $k$ comprised of $x_0, \dots, x_b$ only. These do not vanish on either components. But in this case, $M + M' - M''$ is in the ideal of $R$ and one of them is in the initial ideal.
Counting these possible monomials in $\cal S$, there are $2bm-b$ monomials of weight $b+1 \le k \le 2bm$ that vanishes on $R_i$ for each $i$ plus the $b+1$ monomials $x_0^m, x_0^{m-1}x_1, \dots, x_0^{m-1}x_b$, totaling $2 (2bm-b) + b+1 = 4bm + 1-b$. But this equals $P_R(m)$, the Hilbert polynomial of $R$, and we conclude that these are precisely the monomials in $\cal S$. Summing up their weights, we obtain
\[
2\cdot \sum_{k=b+1}^{2bm}k + \sum_{k=0}^b k = 2 \cdot \sum_{k=1}^{2bm}k - \sum_{k=0}^bk = 2bm(2bm+1) - \frac{b(b+1)}2.
\]
\end{proof}

Since $\rho$ acts on $D$ trivially with weight $2b$, the contribution from $D$ is
\[
2bm \, h^0(\cO(m)|D (-a_0 - a_2)) = 2bm ((4m-1)(g-b-1) - 1).
\]
The (normalized) Hilbert-Mumford index of $C$ with respect to $\rho$ is
\[\displaystyle{
\begin{array}{lll}
\mu([C]_m, \rho) & = &\frac{mP(m)}{N+1} \left(\binom{2b+1}2 + \binom{b+1}2 + b^2 + 2b(3g-3b-4)\right) \\
& & - (4b^2 m^2 + 2b m - \binom{b+1}2) - 2bm((4m-1)(g-b-1)-1) \\
& = & \frac23b(b-1) m^2 - \frac16b(7b-1)m + \binom{b+1}2 \\
& = & \frac16(m-1)(4b(b-1)m-3b(b+1)).
\end{array}}
\]
Using the relation (\ref{E:anm}) and $j = 2b+2$, we obtain
\[
\begin{array}{cllll}
\mu([C]_m, \rho) & = &
 \frac{(j-2)(17\a-8)}{16(7-10\a)^2}\left(8(j+1)\a - (3j+16)\right) \\
 & = & \frac{(j^2-j-2)(17\a-8)}{2(7-10\a)^2}(\a - \a_j).
 \end{array}
 \]

\subsubsection{Basin of attraction of nodal bridges}\label{S:boa-bridges}
We apply similar arguments as in \S~\ref{S:boa-ctails} to analyze the $\rho$-action on the local versal deformation of the singularities, and conclude that the basin of attraction of $C$ with respect to $\rho$ contains arbitrary smoothing of $a_1$ but no smoothing of $a_0$ and $a_2$. If $D\cup_{a_0, a_2} E$ is in the basin of attraction of $B_\rho([C]_m)$, then by \cite[\S~6.2.1]{Has00}, $E$ is hyperellliptic of genus $b$ and $a_0, a_2$ are interchanged by the hyperelliptic involution.
On the other hand, the basin of attraction of $C$ with respect to $\rho^{-1}$ would contain the curves obtained from $C$ by replacing the bridge $R$ by a $(b+1)$-fold node. This completes the proof of Proposition~\ref{P:bridges}.

\begin{remark} Alper, Fedorchuk and Smyth  \cite{AFS} observes that $13\lambda - (2 - \alpha)\delta $ is positively proportional to $\frac{13\alpha-13}{2-\alpha}\lambda + \lambda_2$, and the Hilbert-Mumford index computation boils down to computing
$
\mu^{\lambda}([C],\rho)
$ and $\mu^{\lambda_2}([C],\rho)$. The upshot is that $\lambda_m | [C] = \bigwedge H^0(C, \omega_C^{\otimes m})$, and when $\rho$ is the 1-PS coming from $Aut(C)$, $
\mu^{\lambda_m}([C],\rho)$ is simply the $\rho$-weight of the $\bG_m$ action on 
 $\bigwedge H^0(C, \omega_C^{\otimes m})$. This enables them to compute the Hilbert-Mumford index without  working with explicit parametrization. They also formulate the modularity principle precisely and obtain a  comprehensive outline of the whole program, predicting when various singularities would appear. 
\end{remark}


\begin{figure}[ht]
\centerline{\scalebox{0.6}{\psfig{figure=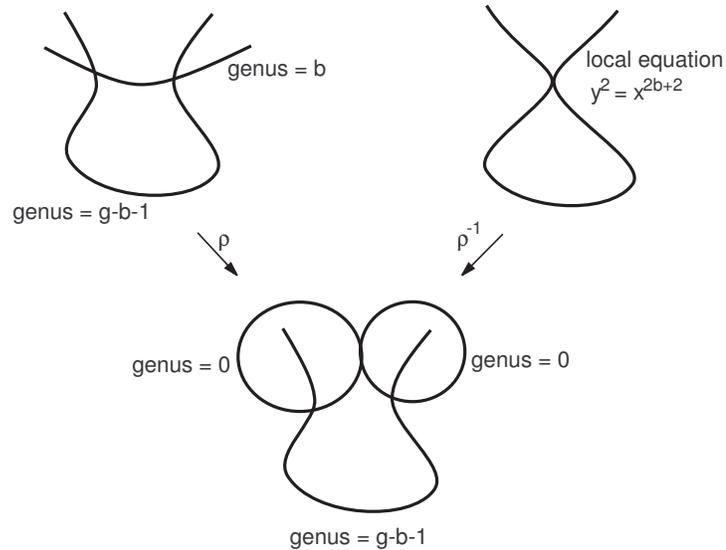}}}
\caption{Basin of attraction of nodal bridges}
\label{F:bridges}
\end{figure}

\bibliographystyle{alpha}
\bibliography{entwurf}

\end{document}